\newtheorem{theorem}{Theorem}[section]
\newtheorem{remark}[theorem]{Remark}
\newcommand{\Ha}{{\mathord{\mathbb H}}}
\newcommand{\R}{{\mathord{\mathbb R}}}
\newcommand{\C}{{\mathord{\mathbb C}}}
\begin{document}
\title[trace inequalities]{Sharp trace inequalities for fractional Laplacians}

\author[Einav]{Amit Einav $^1$}

\thanks{A.\ E. \ and M.\ L.\ were supported in part by NSF grant DMS-0901304.}

\author[Loss]{Michael Loss $^2$}

\address{$^1$ Georgia Institute of Technology, School of Mathematics,
Atlanta, Georgia 30332-0160, aeinav@math.gatech.edu}
\email{\href{mailto:aeinav@math.gatech.edu}{aeinav@math.gatech.edu}}

\address{$^2$ Georgia Institute of Technology, School of Mathematics,
Atlanta, Georgia 30332-0160, loss@math.gatech.edu}
\email{\href{mailto:loss@math.gatech.edu}{loss@math.gatech.edu}}

\maketitle

\centerline{\bf Abstract:} The sharp trace inequality of Jos\'e Escobar is extended to traces for the fractional
Laplacian on $\R^n$ and a complete characterization of cases of equality is discussed. The proof proceeds via Fourier transform
and uses Lieb's sharp form of the Hardy-Littlewood-Sobolev inequality.

\section{Introduction}

In a widely cited paper \cite{escobar}, Jos\'e F. Escobar proved the trace inequality 
\begin{equation} \label{grad}
\left( \int_{\R^{n-1}} |(\tau f) (x)|^{\frac{2(n-1)}{n-2}} \right)^{\frac{n-2}{n-1}}  {\rm d} x \le C_n \int_{\Ha^n}  |\nabla f(x,t)|^2 {\rm d}x {\rm d}t \ .
\end{equation}
Here $\Ha^n$ is the $n$--dimensional upper half-space $\{(x,t): x \in \R^n, t >0\}$ and $(\tau f)(x)$ denotes the trace of the function $f$ on the boundary of $\Ha^n$. The constant $C_n$ is given by
\begin{equation}
C_n = \frac{1}{\sqrt \pi (n-2)} \left(\frac{\Gamma(n-1)}{\Gamma(\frac{n-1}{2})}\right)^{\frac{1}{n-1}} \ ,
\end{equation}
and is the sharp constant in (\ref{grad}). Moreover, there is equality in (\ref{grad}) if and only if $f$
is a multiple of the function
\begin{equation}
\frac{ 1}{((a+t)^2 + |x+b|^2)^{\frac{n-2}{2}}}\ , \ a > 0 \ , \ b \in \R^n \ .
\end{equation}
Escobar  observed that the problem is conformally invariant, i.e, the problem can be mapped to the $n$ dimensional
sphere using stereographic projection. He then proceeds by showing first that an optimizer exists and then, using the method of Obata,
to prove that  $f^{\frac{4}{n-2}}s$ ($s$ is the standard metric on Euclidean space) is an Einstein metric and hence flat.
An entirely different proof of this inequality was found by Beckner \cite{beckner}. He deduced this inequality
from the spectral form Lieb's sharp Hardy-Littlewood-Sobolev inequality on the sphere. Inequality (\ref{grad})
is then recovered via stereographic projection.

Another way of getting Lieb's sharp HLS inequality into the picture was shown in \cite{carlenloss}.
It is based on the simple and well known observation that among all functions $f(x,t)$ in the upper half space that have a given trace $g$, the harmonic function
\begin{equation} \label{harmonic}
h(x,t) := e^{-\sqrt{-\Delta}t}g(x) 
\end{equation}
minimizes the Dirichlet integral
\begin{equation}
\int_{\Ha^n} |\nabla f(x,t)|^2 {\rm d} x {\rm d} t \ .
\end{equation}
By Gauss' theorem
\begin{equation}
\int_{\Ha^n} |\nabla h(x,t)|^2 {\rm d} x {\rm d} t = \int_{\partial \Ha^n} \frac{\partial h}{\partial n}(x,0) h(x,0) {\rm d} x
= \int_{\R^{n-1}} g(x) \sqrt{-\Delta} g (x) {\rm d}x =:(g, \sqrt{-\Delta} g) \ .
\end{equation}
Thus, the inequality (\ref{grad}) takes the form of a Sobolev inequality for fractional derivatives
\begin{equation}
C_n (g, \sqrt{-\Delta} g) \ge  \Vert g \Vert^2_{L^{\frac{2(n-1)}{n-2}}(\R^{n-1}} \ ,
\end{equation}
for which Lieb's sharp HLS inequality (\cite{lieb}) yields the sharp constant, including all the cases of equality. 
Another important approach to Escobar's inequality is based on transportation theory \cite{maggivillani}. This approach has been  generalized  in \cite{nazaret} replacing the two norm by general $L^p$--norms.

In view of these developments, it is natural to investigate the sharp trace inequalities for {\it fractional}  Sobolev spaces. More specifically one may ask for the sharp constant for the inequality
\begin{equation}
\left( \int_{\R^{n-1}} |(\tau f) (x)|^{\frac{2(n-1)}{n-2\alpha}} \right)^{\frac{n-2\alpha}{n-1}}  {\rm d} x \le C_{n, \alpha} 
(f, (-\Delta)^{{\alpha}} f)\ ,
\end{equation}
where 
\begin{equation}
(f, (-\Delta)^{\alpha} f) = \int_{\R^n} |\widehat f(k)|^2 (2\pi |k|)^{2\alpha} {\rm d} k \ .
\end{equation}
As usual, 
\begin{equation}
\widehat f(k) = \int_{\R^n} f(x) e^{-2\pi i x \cdot k} {\rm d} x
\end{equation}
denotes the Fourier transform. 
There was an attempt in  \cite{xiao}, where it was shown that for $\alpha \in (0,1)$ and $h$ defined by  (\ref{harmonic})
\begin{equation} \label{xiao}
\Vert g \Vert^2_{L^{\frac{2(n-1)}{n-1-2\alpha}}(\R^{n-1})} \le C_{n,\alpha} \int_{\Ha^n} |\nabla h(x,t)|^2 t^{1-2\alpha} {\rm d}x {\rm d}t
\end{equation}
where 
\begin{equation}
C_{n,\alpha} = \left(\frac{2^{1-4\alpha}}{\pi^{\alpha} \Gamma(2-2\alpha)}\right)\left(\frac{\Gamma((n-1)/2- \alpha)}{\Gamma((n-1)/2+\alpha))}\right)\left(\frac{\Gamma(n-1)}{\Gamma((n-1)/2)}\right)^{\frac{2\alpha}{n-1}} 
\end{equation}
is the sharp constant.  Using the Fourier transform in the variable $x$, it is easy to see that the right side of (\ref{xiao}) can be written as 
$$
2^{2\alpha-1} \Gamma(2-2\alpha) (g, (-\Delta)^{\alpha} g) \ ,
$$
and hence inequality (\ref{xiao}) appears as a Sobolev inequality for the fractional Laplacian on $\R^{n-1}$
but not as one for the fractional Laplacian on $\R^n$.  Note that the harmonic function $h$ does not minimize the right
side of (\ref{xiao}). Minimizing the right side over all functions with a fixed boundary value  $g$ yields a function $u(x,t)$
that satisfies the equation
$$
\Delta_x u + \partial^2_t u +\frac{1-2\alpha}{t} \partial_t u = 0 \ , \ u(x,0)=g(x) \ .
$$
The fractional Laplacian is then recovered by the formula
$$
\lim_{t \to 0} t^{1-2\alpha} u(x,t) = -C(-\Delta)^{\alpha}g \ ,
$$
where $C$ is some constant.
This and general properties of functions that are `harmonic' in this sense have been derived in \cite{caffarellisylvestre}. 

The aim of our little note is to extend the result of Escobar to true fractional Laplacians and prove a class of trace inequalities in their sharp form.  This seems to be justified in view of the popularity of Escobar's result and of the simplicity of our proof of its generalization.

It is customary to define the space $H_\alpha$ to be the space of all functions in $f \in L^2(\R^n)$ whose Fourier  transform $\widehat f$ satisfies
$$
\int_{\R^n} |\widehat f(k)|^2 (1+|k|^2)^\alpha {\rm d}k < \infty \ .
$$
The advantage of this definition is that $f$, being in $L^2(\R^n)$ is automatically a function. Here we are forced to take another route.
For $2 \alpha < n$ we define the space
$D_\alpha(\R^n)$ as the space of tempered distributions in $\mathcal S'(\R^n)$ whose Fourier transform is a function in $L^2(\R^n, |k|^{2\alpha} {\rm d} k)$. More precisely, it consists of elements $T \in \mathcal S'(\R^n)$ with the property that there exists a function
$\widehat f \in L^2(\R^n, |k|^{2\alpha} {\rm d} k)$ such that for all $\phi \in \mathcal S(\R^n)$
$$
\widehat T(\phi) =: T(\check\phi) = \int_{\R^n} \widehat f (k) \phi(k) {\rm d} k \ .
$$

It is easy to see that $D_\alpha$ 
endowed with the norm
\begin{equation}
\Vert f \Vert_{D_\alpha} := \left( \int_{\R^n} |\widehat f(k)|^2 |2 \pi k|^{2\alpha}{ \rm d }k\right)^{1/2}
\end{equation}
is a Hilbert space. Note, the factor $2\pi$ is convenient since we can interpret the right side as
$$
(f, (-\Delta)^\alpha f)
$$
where $(f,g)$ is the standard $L^2(\R^n,  {\rm d}x)$ inner product. It is not difficult to see that $\mathcal S(\R^n)$ is dense in $D_\alpha(\R^n)$ and we will use this fact frequently. We shall see later that the tempered distributions in  $D_\alpha$ are in fact functions.

For $f \in \mathcal S(\R^n)$ we define the restriction of $f$ to the  $n-m$ dimensional hyperplane given by $\{ x \in \R^n: x=(x_1, \dots, x_{n-m}, 0, 0, \dots , 0) \}$
as
$$
 f(x_1, \dots, x_{n-m}, 0, 0, \dots , 0) =: (\tau_m f)(x_1, \dots , x_{n-m}) \ .
 $$
It is a standard result  that $\tau_1$ extends to a bounded operator
from $H_\alpha (\R^n)$ to $H_{\alpha - 1/2}(\R^{n-1})$. For the case where $f \in D_\alpha(\R^n)$ more can be said. 
\begin{theorem}[Sobolev trace inequality]
\label{thm: sharp trace inequality}Let $0\leq  m <n$
and $\frac{m}{2}<\alpha<\frac{n}{2}$. For any $f\in D_\alpha $
we have \begin{equation}
\left\Vert \tau_{m}f \right\Vert _{L^{\frac{2(n-m)}{n-2\alpha}}}^{2}\leq C_{m,\alpha,n} \Vert f \Vert_{D_\alpha}
\label{eq:trace-check inequality for Schwartz functions with general j-1}\end{equation}
where
 \[
C_{m,\alpha ,n}=2^{-2\alpha} \pi^{-\alpha}\cdot\frac{\Gamma\left(n/2-\alpha \right)\Gamma\left(\alpha-m/2 \right)}{\Gamma\left(\alpha\right)\Gamma\left(n/2+\alpha-m \right)}\left\{ \frac{\Gamma\left(n-m\right)}{\Gamma\left((n-m)/2\right)}\right\} ^{(2\alpha-m)/(n-m)} \ .\]
There is equality only if $f(x)$ is proportional to
\begin{equation}\label{minimizer form}
\int_{\R^m} \frac{1}{(|x'|^2 + |x''-y''|^2)^{(n-2\alpha)/2}} \frac{1}{(\gamma ^2 +|y''-a|^2)^{(n+2\alpha-2m)/2}} dy'' \ .
\end{equation}
for some $a \in \R^{n-m}$ and $\gamma \not= 0$.
\end{theorem}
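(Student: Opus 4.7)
By density of $\mathcal S(\R^n)$ in $D_\alpha$ it suffices to treat Schwartz $f$. The strategy has two independent ingredients: a Cauchy--Schwarz inequality on the Fourier side reduces the trace operation to a fractional Sobolev norm on $\R^{n-m}$, and Lieb's sharp Hardy--Littlewood--Sobolev (HLS) inequality on $\R^{n-m}$ completes the bound. Writing a point of $\R^n$ as $k=(k',k'')\in\R^m\times\R^{n-m}$, the restriction of $f$ to the hyperplane $\{x'=0\}$ corresponds on the Fourier side to integrating out $k'$, namely
$$
\widehat{\tau_m f}(k'')=\int_{\R^m}\widehat f(k',k'')\,dk'.
$$

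\textit{Step 1 (trace to fractional Sobolev norm).} Split the integrand as $\widehat f(k',k'')(2\pi|k|)^{\alpha}\cdot(2\pi|k|)^{-\alpha}$ and apply Cauchy--Schwarz in $k'$. The hypothesis $2\alpha>m$ makes $\int_{\R^m}(2\pi|k|)^{-2\alpha}dk'$ convergent, and the substitution $k'=|k''|u$ gives
$$
\int_{\R^m}(2\pi|k|)^{-2\alpha}dk'=(2\pi)^{-2\alpha}A_{m,\alpha}|k''|^{m-2\alpha},\qquad A_{m,\alpha}=\pi^{m/2}\frac{\Gamma(\alpha-m/2)}{\Gamma(\alpha)}.
$$
Integrating the pointwise bound in $k''$ and cleaning up powers of $2\pi$ yields
$$
\|\tau_m f\|_{D_{\alpha-m/2}(\R^{n-m})}^{2}\le (2\pi)^{-m}A_{m,\alpha}\,\|f\|_{D_\alpha(\R^n)}^{2}.
$$

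\textit{Step 2 (sharp HLS on $\R^{n-m}$).} Lieb's sharp fractional Sobolev inequality on $\R^{n-m}$ with parameter $\beta=\alpha-m/2$ reads
$$
\|g\|_{L^{2(n-m)/(n-2\alpha)}(\R^{n-m})}^{2}\le S_{n-m,\beta}\,\|g\|_{D_\beta(\R^{n-m})}^{2},
$$
with explicit constant $S_{n-m,\beta}$ and optimizers proportional to $(\gamma^2+|x''-a|^2)^{-(n-2\alpha)/2}$. Applying this to $g=\tau_m f$ and chaining with Step~1 produces the trace inequality with constant $(2\pi)^{-m}A_{m,\alpha}S_{n-m,\beta}$; a direct regrouping of $\Gamma$-factors shows this product equals the claimed $C_{m,\alpha,n}$. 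As a consistency check, for $m=0$ Step~1 is vacuous and the answer collapses to Lieb's fractional Sobolev constant on $\R^n$, matching $C_{0,\alpha,n}$.

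\textit{Step 3 (equality).} Equality requires equality both in Cauchy--Schwarz and in HLS. The first forces, at a.e.\ $k''$,
$$
\widehat f(k',k'')=\phi(k'')\,(2\pi|k|)^{-2\alpha}
$$
for some measurable $\phi$, while the second forces $\tau_m f(x'')=c(\gamma^2+|x''-a|^2)^{-(n-2\alpha)/2}$. From the identity $\widehat{\tau_m f}(k'')=(2\pi)^{-2\alpha}A_{m,\alpha}|k''|^{m-2\alpha}\phi(k'')$ one reads off $\phi$, and Fourier-inverting identifies $f$ as the convolution in the $x''$ variable of the Riesz kernel $(|x'|^2+|x''|^2)^{-(n-2\alpha)/2}$ with $(\gamma^2+|y''-a|^2)^{-(n+2\alpha-2m)/2}$, which is exactly the form (\ref{minimizer form}). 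The main difficulty throughout is the bookkeeping in Step~2 that matches the product of two sharp constants to $C_{m,\alpha,n}$, together with checking in Step~3 that the specific exponent $(n+2\alpha-2m)/2$ in the outer factor is precisely the one whose Riesz convolution reproduces a standard HLS optimizer on the trace; once these $\Gamma$-identities and Fourier-normalization choices are pinned down, the rest is routine.
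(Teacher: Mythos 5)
Your proposal is correct and follows essentially the same route as the paper: a Fourier-side Cauchy--Schwarz estimate bounding $\Vert \tau_m f\Vert_{D_{\alpha-m/2}(\R^{n-m})}$ by $\Vert f\Vert_{D_\alpha(\R^n)}$ (the paper's Theorem \ref{thm:trace}, which it proves for $m=1$ and iterates, whereas you do the $m$-dimensional integral in one step), composed with the sharp fractional Sobolev inequality on $\R^{n-m}$ obtained by duality from Lieb's HLS inequality (the paper's Theorem \ref{thm:sobolev}), with equality traced back through both steps to the convolution formula (\ref{minimizer form}). The constants and exponents you record all match the paper's.
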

Note that for the case $\alpha =1$ and $m=1$ our constant differs from Esobar's by a factor $1/2$. This is due to the fact that our functions 
are defined on the whole space. For general values of $\alpha$, $D_\alpha(\Ha^n) $ is not defined unless one considers
only functions that have support in $\Ha^n$. Also, in the case of $\alpha=1$ and $m=1$ (\ref{minimizer form}) is easily computable and yields (up to a constant) the optimizer in Escobar's paper \cite{escobar}, as expected.

As mentioned above, trace theorems from  $H_\alpha (\R^n)$ to $H_{\alpha - 1/2}(\R^{n-1})$ are standard. 
Theorem \ref{thm:trace}  in the next section yields the exact norm of the  trace as a linear operator from  $D_\alpha (\R^n)$ to $D_{\alpha - 1/2}(\R^{n-1})$. 
This, together with the sharp Sobolev inequality will yield Theorem  \ref{thm: sharp trace inequality}.
which is, as far as we know, also new for integer $\alpha$,  e.g., $\alpha =2$ and  $n \ge 5$. The tools used in the various proofs, with the exception
of Lieb's sharp Hardy-Littlewood-Sobolev inequality, are all elementary.

\bigskip

{\bf Acknowledgement:} We would like to thank Rupert Frank for helpful discussions concerning this work.

\section{Proof of Theorem \ref{thm: sharp trace inequality}}

That $f \in D_\alpha(\R^n)$ is in fact a function follows among other things from the next  theorem. 
Its proof is patterned after the proof of the Sobolev inequality for relativistic kinetic energy $|p|$ in \cite{analysis}
Theorem 8.4., and is just a dual version of Lieb's sharp Hardy-Littlewood-Sobolev inequality.
\begin{theorem}[Sobolev inequality] \label{thm:sobolev} Let $0\le \alpha < n/2$. For any $f \in D_\alpha(\R^n)$ 
\begin{equation} \label{sobolev}
\Vert f \Vert_s^2 \le    2^{-2\alpha}  \pi^{-\alpha} \frac{\Gamma(n/2-\alpha)}{\Gamma(n/2+\alpha)} \left\{ \frac{\Gamma(n)}{\Gamma(n/2)}\right\}^{2\alpha/n}  \Vert f \Vert_{D_\alpha}^2
\end{equation}
where $s = 2n/(n-2\alpha)$.
The constant on the right side of (\ref{sobolev}) is best possible and there is equality if and only if 
\begin{equation}  \label{optimal}
f(x) = A(\gamma^2 +|x-a|^2)^{-(n-2\alpha)/2}
\end{equation}
where $A \in \C, \gamma \not= 0 $ and $a \in \R^n$.
\end{theorem}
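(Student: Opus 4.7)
The approach is to rewrite the Sobolev inequality as an $L^2\to L^s$ bound for the Riesz potential and then dualize into Lieb's sharp Hardy--Littlewood--Sobolev inequality; once this reformulation is in place the rest of the proof is essentially constant-bookkeeping. Assuming $\alpha>0$ (the case $\alpha=0$ is trivial since $s=2$ and $\Vert f\Vert_2=\Vert f\Vert_{D_0}$), for $f\in D_\alpha(\R^n)$ define $h$ via $\widehat h(k)=(2\pi|k|)^\alpha\widehat f(k)$. Then $h\in L^2(\R^n)$ with $\Vert h\Vert_2=\Vert f\Vert_{D_\alpha}$, and $f$ equals the Riesz potential
$$I_\alpha h(x)=\frac{1}{\gamma_n(\alpha)}\int_{\R^n}\frac{h(y)}{|x-y|^{n-\alpha}}\,dy,\qquad \gamma_n(\alpha)=\pi^{n/2}\,2^\alpha\,\frac{\Gamma(\alpha/2)}{\Gamma((n-\alpha)/2)},$$
understood a priori as the tempered distribution whose Fourier transform is $(2\pi|k|)^{-\alpha}\widehat h(k)$.

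Next, for Schwartz $g$, Fubini gives
$$\int_{\R^n}g(x)f(x)\,dx=\frac{1}{\gamma_n(\alpha)}\int_{\R^n}\!\int_{\R^n}\frac{g(x)h(y)}{|x-y|^{n-\alpha}}\,dx\,dy.$$
With $s'=2n/(n+2\alpha)$, the exponents $(s',2)$ and kernel exponent $n-\alpha$ satisfy the HLS scaling relation $\tfrac{1}{s'}+\tfrac{1}{2}+\tfrac{n-\alpha}{n}=2$, so Lieb's sharp HLS inequality \cite{lieb} in the case one exponent equals $2$ yields
$$\int\!\int\frac{g(x)h(y)}{|x-y|^{n-\alpha}}\,dx\,dy\;\leq\;M_{n,\alpha}\,\Vert g\Vert_{s'}\Vert h\Vert_2$$
with a known sharp $M_{n,\alpha}$. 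Taking the supremum over $g\in\mathcal S$ with $\Vert g\Vert_{s'}=1$ --- which computes $\Vert f\Vert_s$ for Schwartz $f$ and extends to $D_\alpha$ by density --- produces $\Vert f\Vert_s\leq (M_{n,\alpha}/\gamma_n(\alpha))\,\Vert f\Vert_{D_\alpha}$; in particular this already shows that every element of $D_\alpha$ is a function in $L^s$, as asserted in the preamble to the theorem.

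Squaring and inserting the explicit sharp value of $M_{n,\alpha}$, the product collapses (via the Legendre duplication formula $\Gamma(z)\Gamma(z+\tfrac12)=2^{1-2z}\sqrt\pi\,\Gamma(2z)$) to the coefficient displayed in (\ref{sobolev}). For the equality case, Lieb's classification of extremizers in HLS when one exponent is $2$ pins $h$ down, up to conformal symmetries and a scalar multiple, as $(\gamma^2+|y-a|^2)^{-(n+\alpha)/2}$; a short direct computation --- or the Riesz composition identity for the convolution of two radial power kernels --- then shows $f=I_\alpha h$ is a multiple of $(\gamma^2+|x-a|^2)^{-(n-2\alpha)/2}$, matching (\ref{optimal}). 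I expect the main obstacle to be not conceptual but computational: threading $\gamma_n(\alpha)$ and Lieb's sharp HLS constant through the duality argument and verifying that the resulting ratio of gamma functions collapses exactly to the compact form stated in the theorem, together with the Riesz-composition verification needed to identify the extremals of $f$ from those of $h$.
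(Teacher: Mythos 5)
Your derivation of the inequality itself is sound and close in spirit to the paper's: both arguments compute $\Vert f \Vert_s$ by duality against Schwartz functions and feed the resulting bilinear form into Lieb's sharp HLS inequality, followed by the same density argument. The difference is that you factor $f = I_\alpha h$ with $h \in L^2$ and invoke the off-diagonal sharp HLS with exponents $(s',2)$ and kernel $|x-y|^{-(n-\alpha)}$, whereas the paper stays on the Fourier side, applies Cauchy--Schwarz against $|2\pi k|^{\alpha}\widehat f$, and then needs only the diagonal sharp HLS with kernel $|x-y|^{-(n-2\alpha)}$ applied to the single test function $g$. Both routes yield the sharp constant, since $\Vert I_{2\alpha}\Vert_{L^{s'}\to L^s} = \Vert I_\alpha\Vert_{L^2\to L^s}^2$, so up to the gamma-function bookkeeping you defer, the inequality and the value of the constant are fine.

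The equality case, however, contains a genuine error. The $L^2$ extremizer of the off-diagonal HLS with exponents $(s',2)$ is \emph{not} $(\gamma^2+|y-a|^2)^{-(n+\alpha)/2}$; that is the extremizer of the \emph{diagonal} HLS with kernel $|x-y|^{-(n-\alpha)}$, which lives in $L^{2n/(n+\alpha)}$, not in $L^2$. In the one-exponent-equal-to-two case of Lieb's theorem the extremal pair is: $g$ the conformal factor $(1+|x|^2)^{-(n+2\alpha)/2}$ in $L^{s'}$, and $h$ proportional to $|x|^{-(n-\alpha)}\ast g = I_\alpha g$, which is not a pure inverse power (it is $(-\Delta)^{\alpha/2}$ applied to the Sobolev extremizer). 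Moreover, even granting your $h$, the computation you propose fails: by the conformal identity $\int |x-y|^{-(n-\alpha)}(1+|y|^2)^{-(n+\alpha)/2}\,dy = c\,(1+|x|^2)^{-(n-\alpha)/2}$, your $I_\alpha h$ would be a multiple of $(\gamma^2+|x-a|^2)^{-(n-\alpha)/2}$, not the asserted $(\gamma^2+|x-a|^2)^{-(n-2\alpha)/2}$ of (\ref{optimal}); the Riesz composition identity does not apply because $h$ is not a power kernel. The correct chain is $f = I_\alpha(I_\alpha g) = I_{2\alpha} g \propto (1+|x|^2)^{-(n-2\alpha)/2}$, which is in substance what the paper does: equality in its Cauchy--Schwarz step forces $\widehat f = C\,\widehat g/|k|^{2\alpha}$ with $g$ the genuinely pure-power diagonal optimizer, and $f$ is then read off from the Euler--Lagrange equation $\int g(y)|x-y|^{-(n-2\alpha)}\,dy = C g^{r-1}$. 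To repair your argument you should either use the correct characterization of the extremal pair in the $p=2$ case or reduce, as the paper does, to the diagonal HLS.
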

\begin{remark}
As a consequence of the above theorem and the density of $\mathcal S(\R^n)$ in $D_\alpha(\R^n)$ we have that any distribution  in $D_\alpha(\R^n)$ is a function. Thus, we can say that $D_\alpha(\R^n)$ is the space of all functions $f \in
L^{2n/(n-2\alpha)}(\R^n)$ for which there exists a function $g \in L^2(\R^n, |k|^{2\alpha} dk)$ such that
$$
\int_{\R^n}\phi(x) f(x) dx =  \int_{\R^n} \widehat \phi(k) g(k) dk 
$$
for all $\phi \in \mathcal S(\R^n)$. This function $g$ is unique and we shall denote it by $\widehat f(k)$.
\end{remark}
\begin{proof}
For $f, g \in \mathcal S(\R^n)$ we have that
\begin{equation} \label{plancherel}
\int_{\R^n} \overline {f(x)} g(x) {\rm d} x =  \int_{\R^n} \overline {\widehat f (k)} \widehat g(k) {\rm d} k \ ,
\end{equation}
and by Schwarz's inequality
\begin{equation} \label{schwarz}
\int_{\R^n} \overline {\widehat f (k)} \widehat g(k) {\rm d}k \le  \Vert f \Vert_{D_\alpha}\left( \int_{\R^n} \frac{|\widehat g(k)|^2}{|2 \pi k|^{2\alpha}} {\rm d} k\right)^{1/2} \ .
\end{equation}
Since
\begin{equation} \label{equivalence}
\int_{\R^n} \frac{|\widehat g(k)|^2}{|k|^{2\alpha}} {\rm d} k = \pi^{-n/2+2\alpha} \frac{\Gamma(n/2 -\alpha)}{\Gamma(\alpha)} \int_{\R^n} \int_{\R^n} \frac{ \overline {g (x)} g(y)}{|x-y|^{n-2\alpha}} {\rm d} x {\rm d} y \ ,
\end{equation}
(see Corollary 5.10 in \cite{analysis}), we may use Lieb's sharp form of the Hardy-Littlewood-Sobolev inequality  \cite{lieb} (see also \cite{analysis}, \cite{carlenloss}), which states that
\begin{equation}\label{sharpHLS}
 \int_{\R^n} \int_{\R^n} \frac{ \overline {g (x)} g(y)}{|x-y|^{n-2\alpha}} {\rm d} x {\rm d} y \le \pi^{n/2 -\alpha} \frac{\Gamma(\alpha)}{\Gamma(n/2 + \alpha)} \left\{ \frac{\Gamma(n)}{\Gamma(n/2)}\right\}^{2\alpha/n} \Vert g \Vert_r^2
 \end{equation}
where $r = 2n/(n+2\alpha) <2 $. There is equality if and only if
\begin{equation} \label{optimizer}
g(x) =  A (\gamma^2 +|x-a|^2)^{-(n+2\alpha)/2} \
\end{equation}
where $A \in \C, \gamma > 0$ and $a \in \R^n$. Thus, 
$$
\left( \int_{\R^n} \overline{ f(x)} g(x) {\rm d} x \right)^2 \le  2^{-2\alpha}  \pi^{-\alpha} \frac{\Gamma(n/2-\alpha)}{\Gamma(n/2+\alpha)} \left\{ \frac{\Gamma(n)}{\Gamma(n/2)}\right\}^{2\alpha/n}\Vert f \Vert_{D_\alpha} ^2 \Vert g \Vert_r^2
$$
and  taking the supremum over all $g \in \mathcal S(\R^n)$ yields inequality (\ref{sobolev}) for $f \in \mathcal S(\R^n)$. 

Now let $f \in D_\alpha(\R^n)$ be any distribution. Since $\mathcal S(R^n)$ is dense, there exists a sequence of
functions $\widehat f_j \in \mathcal S(\R^n)$ with 
$$
\int_{\R^n} |\widehat f_j(k) - \widehat f(k)|^2 |k|^{2\alpha} dk \to 0
$$
Thus, $f_j$ is a Cauchy sequence in $L^s(\R^n)$ with a limit which we call $\phi_f $. Since $\widehat f_j$
converges to $\widehat f$ in $\mathcal S'(\R^n)$ and since  the Fourier transform
is an isomorphism on $\mathcal S'(\R^n)$, we have that $f_j$ converges to $f$ in $\mathcal S'(\R^n)$.
Hence, $\phi_f =f$ and the inequality (\ref{sobolev}) is valid for all $f \in D_\alpha(\R^n)$. Similar approximation arguments show that
 equation (\ref{plancherel}) continuous to hold
for all functions $f \in D_\alpha(\R^n)$ and all $g \in L^r(\R^n)$. Hence, to establish the cases of equality we need equality in (\ref{schwarz}) which implies that
\begin{equation}\label{Fourier sobolev minimizer}
\widehat f(k) = C \frac{\widehat g(k)}{|k|^{2\alpha}} \ ,
\end{equation}
where $C$ is some constant and where $\widehat g$ is such that
\begin{equation} \label{equalitycondition}
\int_{\R^n} \frac{|\widehat g(k)|^2}{|k|^{2\alpha}}  {\rm d} k < \infty \ .
\end{equation}
Further, there must be equality in the HLS inequality (\ref{sharpHLS}) which mean that $g$ must
be of the form (\ref{optimizer}). This function is integrable and smooth and hence its Fourier transform is bounded
with fast decay. Hence (\ref{equalitycondition}) also holds for this particular function. It remains to show that
$f$ is of the form (\ref{optimal}). To see this we imitate the proof of Corollary 5.9 in \cite{analysis} and obtain that the inverse
Fourier transform of $\widehat g(k)/|k|^{2\alpha}$ is given by
$$
\pi^{- n/2+2\alpha} \frac{\Gamma(n/2-\alpha)}{\Gamma(\alpha)}\int_{\R^n} \frac{g(y)}{|x-y|^{n-2\alpha} }{\rm d} y \ .
$$
Since $g$ is an optimizer for the HLS inequality it must satisfy the Euler-Lagrange equation 
$$
\int_{\R^n} \frac{g(y)}{|x-y|^{n-2\alpha} }{\rm d} y = C g(x)^{r-1}  \ ,
$$
where $C$ is some constant. This yields the form of the optimizer (\ref{optimal}).

\end{proof}

\begin{theorem}[Trace inequality] \label{thm:trace}
Assume that $\frac{m}{2}  < \alpha < \frac{n}{2}$. Then the trace $\tau_m$ has a unique extension to a bounded operator 
$\tau_m : D_\alpha(\R^n) \rightarrow D_{\alpha-m/2}(\R^{n-m})$. Moreover, for any  $f \in D_\alpha(\R^n)$ 
\begin{equation} \label{traceestimate}
\Vert \tau_m f \Vert_{D_{\alpha-m/2}(\R^{n-m})} \le  \frac{1}{2^m \pi^{\frac{m}{2}}} \frac{\Gamma(\frac{2\alpha - m}{2})}{\Gamma(\alpha)} 
\Vert f \Vert_{D_\alpha(\R^n)} \ .
\end{equation}
The constant in this inequality is best possible and there is equality only if
\begin{equation}\label{equalitys}
\widehat f (k_1,k_2) =  C \frac{\widehat g (k_1)}{(|k_1|^2 + |k_2|^2)^{\alpha}}
\end{equation}
with
\begin{equation} \label{equalityl}
\int_{\R^{n-m}} \frac{|\widehat g (k_1)|^2} {|k_1|^{2\alpha-m}} {\rm d} k_1 < \infty \ .
\end{equation}
Here $k_1 \in \R^{n-m}$ and $k_2 \in \R^m$.
\end{theorem}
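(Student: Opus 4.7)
My plan is to use the Fourier side, which converts the trace to a slice marginal, apply Cauchy--Schwarz, and then compute one explicit Beta-type integral. The whole argument mirrors the Sobolev proof above in spirit but is more elementary: it is just Cauchy--Schwarz instead of HLS.

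\textbf{Step 1: Fourier formula for the trace.} For $f \in \mathcal{S}(\R^n)$, writing $k=(k_1,k_2)$ with $k_1 \in \R^{n-m}$ and $k_2 \in \R^m$, Fourier inversion at $(x_1,\dots,x_{n-m},0,\dots,0)$ gives
\[
\widehat{\tau_m f}(k_1) \;=\; \int_{\R^m} \widehat f(k_1,k_2)\,{\rm d}k_2,
\]
which I would justify for Schwartz $f$ by Fubini (the inner integral converges because $\widehat f$ is Schwartz).

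\textbf{Step 2: Cauchy--Schwarz and the crucial one-variable integral.} Splitting the integrand as $\widehat f(k_1,k_2)|k|^{\alpha}\cdot|k|^{-\alpha}$ and applying Cauchy--Schwarz in $k_2$,
\[
|\widehat{\tau_m f}(k_1)|^2 \;\le\; \Bigl(\int_{\R^m}|\widehat f(k_1,k_2)|^2 |k|^{2\alpha}\,{\rm d}k_2\Bigr)\cdot\Bigl(\int_{\R^m}\frac{{\rm d}k_2}{(|k_1|^2+|k_2|^2)^{\alpha}}\Bigr).
\]
The second factor is evaluated by the rescaling $k_2 = |k_1|\,u$ and the classical Beta-function identity
\[
\int_{\R^m} \frac{{\rm d}u}{(1+|u|^2)^{\alpha}} \;=\; \pi^{m/2}\,\frac{\Gamma(\alpha - m/2)}{\Gamma(\alpha)},
\]
which converges precisely when $2\alpha > m$. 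This produces the factor $|k_1|^{m-2\alpha}\,\pi^{m/2}\Gamma(\alpha-m/2)/\Gamma(\alpha)$.

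\textbf{Step 3: Integrate against the target weight.} Multiply the previous bound by $(2\pi|k_1|)^{2\alpha-m}$ and integrate over $k_1$; the powers $|k_1|^{2\alpha-m}$ and $|k_1|^{m-2\alpha}$ cancel exactly. Fubini then collapses the double integral back to $\int_{\R^n}|\widehat f|^2 |k|^{2\alpha}\,{\rm d}k$, and tracking the $2\pi$ factors yields
\[
\|\tau_m f\|^2_{D_{\alpha-m/2}} \;\le\; \frac{1}{2^m \pi^{m/2}}\,\frac{\Gamma(\alpha - m/2)}{\Gamma(\alpha)}\,\|f\|^2_{D_\alpha},
\]
which is the asserted bound. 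The density of $\mathcal{S}(\R^n)$ in $D_\alpha(\R^n)$, together with the bound just proved on $\mathcal{S}$, gives a unique continuous extension to all of $D_\alpha(\R^n)$.

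\textbf{Step 4: Sharpness and equality.} Equality in the Cauchy--Schwarz step at each fixed $k_1$ forces $\widehat f(k_1,k_2)|k|^{\alpha}$ to be proportional (in $k_2$) to $|k|^{-\alpha}$, the proportionality coefficient being an arbitrary function $\widehat g(k_1)$; this is exactly (\ref{equalitys}). Plugging this form back in, $\|f\|_{D_\alpha}^2$ becomes a constant multiple of $\int_{\R^{n-m}} |\widehat g(k_1)|^2 |k_1|^{m-2\alpha}\,{\rm d}k_1$, so finiteness of $\|f\|_{D_\alpha}$ is equivalent to (\ref{equalityl}). Sharpness of the constant then follows by taking a sequence of Schwartz $\widehat g$'s (e.g.\ Gaussians) satisfying (\ref{equalityl}), which is feasible since $2\alpha-m<n-m$ by hypothesis $\alpha<n/2$.

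The only point that requires care is the density extension, because trace is genuinely ill-defined on arbitrary elements of $D_\alpha$; but once the a priori Cauchy--Schwarz estimate is in hand on $\mathcal{S}$, $\tau_m$ extends by continuity and the extension agrees with the pointwise trace whenever the latter makes sense. Aside from this, the proof is a clean Cauchy--Schwarz computation and is shorter than the proof of Theorem \ref{thm:sobolev}.
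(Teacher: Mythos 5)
Your proposal is correct and takes essentially the same approach as the paper: the Fourier slice formula $\widehat{\tau_m f}(k_1)=\int_{\R^m}\widehat f(k_1,k_2)\,{\rm d}k_2$ followed by Cauchy--Schwarz in $k_2$ with the splitting $(|k_1|^2+|k_2|^2)^{\pm\alpha/2}$, with density giving the extension and the equality condition read off from Cauchy--Schwarz. The only (cosmetic) difference is that you handle general $m$ in one step via the $m$-dimensional Beta integral, while the paper proves $m=1$ and iterates; the resulting constants agree, and the paper is somewhat more careful than you are in verifying that the slice formula persists for general $f\in D_\alpha$ before invoking the equality cases.
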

\begin{proof}
It is sufficient to prove the statement for $m=1$. The rest follows by iterating the result for $m=1$. Let  $f \in \mathcal S(\R^n)$
and note that
$$
 \widehat {(\tau_1 f)}(k')  =  \int_\R  \widehat f(k', k_n) {\rm d} k_n \ .
$$
By Schwarz's inequality,
\begin{eqnarray} \label{setup}
& & |\int_\R  \widehat f(k', k_n) {\rm d} k_n|^2   \nonumber \\
&=& |\int_\R  \widehat f(k', k_n) (|2\pi k'|^2 + (2\pi k_n)^2)^{\alpha/2}   (|2\pi  k'|^2 + (2 \pi k_n)^2)^{-\alpha/2}  {\rm d} k_n|^2 \nonumber \\
&\le& \frac{1}{2 \sqrt \pi} \frac{\Gamma(\alpha -\frac{1}{2})}{\Gamma(\alpha)} |2 \pi k'|^{-2\alpha+1}  \int_\R | \widehat f(k',k_n)|^2(|2 \pi k'|^2 +(2 \pi k_n)^2)^{\alpha} {\rm d} k_n  \ ,
\end{eqnarray}
which establishes (\ref{traceestimate}) for all $f \in \mathcal S(\R^n)$ and hence the continuous extension of $\tau_1$ to all of $D_\alpha(\R^n)$.

Pick any $f \in D_\alpha(\R^n)$ and let $f_j \in S(\R^n)$ converge to $f$ in $D_\alpha(\R^n)$. Hence $\tau_1f_j$ converges to $\tau_1 f$ in $D_{\alpha-1/2}(\R^n)$ and in particular in $L^{2(n-1)/(n-2\alpha)}(\R^{n-1})$
by Theorem \ref{thm:sobolev}. The formula 
$$
 \widehat {(\tau_1 f_j)}(k')  =  \int_\R  \widehat f_j(k', k_n) {\rm d} k_n 
$$
holds for all $j$.  Integrating this against a test function $\phi \in \mathcal S(\R^{n-1})$ yields
$$
\int_{\R^{n-1}} \phi(x') (\tau_1 f_j) (x') d x' = \int_{\R^n} \widehat \phi(k') \widehat f_j(k',k_n) dk' dk_n  \ .
$$
Hence, 
$$
\int_{\R^{n-1}} \phi(x') (\tau_1 f) (x') d x' = \int_{\R^n} \widehat \phi(k') \widehat f(k',k_n) dk' dk_n 
$$
and 
$$
\widehat {(\tau_1 f)}(k') = \int_{\R} \widehat f(k',k_n) d k_n \ .
$$
Thus, the steps in (\ref{setup}) are valid for any $f\in D_{\alpha}\left(\mathbb{R}^{n}\right)$ and the cases of equality follow from the cases of equality in Schwarz's inequality.

\end{proof}

\begin{proof}[Proof of Theorem \ref{thm: sharp trace nequality}] The inequality follows from using Theorem \ref{thm:trace} with $f$ and Theorem \ref{thm:sobolev} with $\tau_{m}f$. In order to have equality we must have that $f$ satisfies (\ref{equalitys}) with (\ref{equalityl}). Also, $\tau_{m}f$ must satisfy (\ref{Fourier sobolev minimizer}) where $g$ is of the form (\ref{optimizer}) with the appropriate dimension. Since 
$$
 \widehat {(\tau_m f)}(k_1)  =  \int_{\R^m}  \widehat{f}(k_1, k_2) {\rm d} k_2 
$$
we conclude that $\widehat{f}$ is of the form (\ref{equalitys}) with $g$ of the form (\ref{optimizer}) with the appropriate dimension in the variables. This leads to  (\ref{minimizer form}).
\end{proof}

\end{document}